\newtheorem{theorem}{Theorem}
\newtheorem{lemma}[theorem]{Lemma}
\newtheorem{corollary}[theorem]{Corollary}
\newtheorem{conjecture}[theorem]{Conjecture}
\newtheorem{definition}[theorem]{Definition}
\newtheorem{proposition}[theorem]{Proposition}
\journal{Discrete Mathematics}
\begin{document}

\begin{frontmatter}

\title{A topological lower bound for the chromatic number\\
of a special family of graphs}

\author[label1]{Hamid Reza Daneshpajouh}
\address[label1]{School of Mathematics, Institute for Research in Fundamental Sciences (IPM),\\
Tehran, Iran, P.O. Box 19395-5746}


\ead{hr.daneshpajouh@gmail.com, hr.daneshpajouh@ipm.ir}



\begin{abstract}
For studying topological obstructions to graph colorings, Hom-complexes were introduced by Lov\'{a}sz.
A graph $T$ is called a test graph if for every
graph $H$, the $k$-connectedness of $|Hom(T, H)|$ implies $\chi (H)\geq k + 1 + \chi(T)$. The proof of the famous Kneser conjecture is based on the fact that $\mathcal{K}_2$, the complete graph on $2$ vertices, is a test graph. This result was extended to all complete graphs by Babson and Kozlov. 
Their proof is based on generalized nerve lemma 
and discrete Morse theory.
    
In this paper, we propose a new topological lower bound for the chromatic number of a special family of graphs. As an application of this bound, we give a new proof of the well-known fact that complete graphs and even cycles are test graphs.  
\end{abstract}

\begin{keyword}
Graph coloring \sep Hom complex \sep graph homomorphism \sep test graph
\end{keyword}

\end{frontmatter}


\section{Introduction}
Graph coloring is one of the most challengeable and practical topics in combinatorics. A proper (vertex) coloring is an assignment of colors to each vertex of a graph such that no edge connects two identically colored vertices. The smallest number of colors needed for proper coloring of a graph $G$ is the chromatic number, $\chi (G)$. In general, determining the chromatic number of a graph is an arduous problem. 
Even deciding whether a given planar graph is $3$-colorable is NP-complete problem~\cite{dailey1980uniqueness}.
In other words, it means that no convenient method is known for calculating the chromatic number of an arbitrary graph. This question now arises naturally: Can we at least make a "good" approximation on the number of colors we need? To estimate the chromatic number of a graph, we usually need to go through following steps:
\begin{itemize}
    \item Giving a proper coloring, to find out how many different colors are \textbf{sufficient} for coloring.
    \item Giving a rigorous argument, to show that how many different colors are \textbf{necessary} for coloring.
\end{itemize}
In other words, in the first part we obtain an upper bound and in the second part, we obtain a lower bound on the chromatic number. Whatever these bounds are closer, our estimation is better! 

To provide new (topological) lower bounds on the chromatic number of graphs, Hom complexes were defined by Lov\'{a}sz and they have been extensively studied by many authors, see ~\cite{kozlov2007combinatorial, dochtermann2009hom, schultz2009graph}. 
The breakthrough proof of the well-known Kneser conjecture by Lov\'{a}sz~\cite{lovasz1978kneser} implies the following lower bound on the chromatic number of graphs.
\begin{theorem}
If $|Hom(\mathcal{K}_2, H)|$ is $k$-connected, then $\chi (H)\geq k+3.$
\end{theorem}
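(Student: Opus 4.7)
The plan is to combine the functoriality of the Hom complex with a $\mathbb{Z}/2$-equivariant Borsuk--Ulam style index argument, which is the classical Lov\'{a}sz approach. First, I would equip $|Hom(\mathcal{K}_2, H)|$ with the $\mathbb{Z}/2$-action induced by the involution on $\mathcal{K}_2$ that swaps its two vertices. I would then verify that this action is free on the geometric realization: a fixed cell would have to be of the form $(A,A)$, but cells of $Hom(\mathcal{K}_2, H)$ correspond to pairs of nonempty vertex subsets that are completely joined in $H$, and such a cell would require some vertex of $H$ to be adjacent to itself, contradicting the fact that $H$ is a simple loopless graph.

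Next, given any proper $c$-coloring of $H$, regarded as a graph homomorphism $H \to \mathcal{K}_c$, I would use functoriality of $Hom(\mathcal{K}_2, -)$ in the second argument to obtain a $\mathbb{Z}/2$-equivariant continuous map
\[
f \colon |Hom(\mathcal{K}_2, H)| \longrightarrow |Hom(\mathcal{K}_2, \mathcal{K}_c)|.
\]
The technical heart of the proof, and the step I expect to be the main obstacle, is to show that $|Hom(\mathcal{K}_2, \mathcal{K}_c)|$ is $\mathbb{Z}/2$-homotopy equivalent to the sphere $S^{c-2}$ with the antipodal action. The cells of $Hom(\mathcal{K}_2, \mathcal{K}_c)$ are indexed by ordered pairs $(A,B)$ of disjoint nonempty subsets of $[c]$, with $\dim = |A|+|B|-2$; these assemble (up to an explicit collapse or deformation retraction) into the boundary of the cross-polytope in the appropriate dimension, and the swap $(A,B)\mapsto(B,A)$ realizes the antipodal map.

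Finally, I would close the argument via the $\mathbb{Z}/2$-index inequality. A free $\mathbb{Z}/2$-space that is $k$-connected has $\mathbb{Z}/2$-index at least $k+1$, while the index of $S^{c-2}$ with the antipodal action equals $c-2$. Since a $\mathbb{Z}/2$-equivariant map cannot increase the index, the existence of $f$ forces $k+1 \leq c-2$, i.e.\ $c \geq k+3$. Applying this with $c = \chi(H)$ yields $\chi(H)\geq k+3$, which is the desired bound.
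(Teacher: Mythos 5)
Your proof is essentially correct, but it follows the classical Lov\'{a}sz route rather than the one taken in this paper. You pass a proper $c$-coloring of $H$ through the functoriality of $Hom(\mathcal{K}_2,-)$ to get a $\mathbb{Z}_2$-equivariant map into $|Hom(\mathcal{K}_2,\mathcal{K}_c)|$, and then you must identify that target equivariantly with $S^{c-2}$ --- the step you rightly flag as the technical heart. The paper avoids that identification entirely. It forms the \emph{compatibility graph} $C_P$ of the free $\mathbb{Z}_2$-poset $P=Hom_p(\mathcal{K}_2,H)$ (vertices are the poset elements; $x$ and $y$ are adjacent when $x$ and $g\cdot y$ are comparable for some $g\neq e$), observes that the map $(A_1,A_2)\mapsto a_1\in A_1$ is a graph homomorphism $C_P\to H$, so a $c$-coloring of $H$ pulls back to a $c$-coloring of $C_P$, and then uses that coloring to build, by hand, a simplicial $\mathbb{Z}_2$-equivariant map $\Delta(P)\to\Delta(\mathbb{Z}_2\times\{1,\dots,c-1\})$, which is an $\mathbb{E}_{c-2}\mathbb{Z}_2$ space; the index inequality then gives $k+1\leq \mathrm{ind}_{\mathbb{Z}_2}|Hom(\mathcal{K}_2,H)|\leq c-2$, exactly your conclusion. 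Both arguments close with the same index bookkeeping; the difference is that your approach needs the homotopy type of $Hom(\mathcal{K}_2,\mathcal{K}_c)$ (easy for $\mathcal{K}_2$, but the hard part of Babson--Kozlov for general $\mathcal{K}_r$, where it requires the nerve lemma and discrete Morse theory), whereas the compatibility-graph construction is purely combinatorial and applies verbatim to all $\mathcal{K}_r$ and to even cycles. One small point in your write-up: freeness of the action should be checked on simplices of the order complex, not just on poset elements; a chain mapped to itself by the order-preserving involution is fixed pointwise, so the argument reduces to your observation that no element $(A,A)$ can occur, and your reasoning goes through.
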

This result was extended to all complete graphs by Babson and Kozlov, see~\cite[section 19.2]{kozlov2007combinatorial}.
\begin{theorem}
If $|Hom(\mathcal{K}_r, H)|$ is $k$-connected, then $\chi (H)\geq  k+r+1$.
\end{theorem}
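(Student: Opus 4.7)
The plan is to prove the contrapositive: assuming $\chi(H) = n$, I aim to bound the connectivity of $|Hom(\mathcal{K}_r, H)|$ from above by $n-r-1$. Any proper $n$-coloring of $H$ is a graph homomorphism $c \colon H \to \mathcal{K}_n$, which functorially induces a continuous map $c_{*} \colon |Hom(\mathcal{K}_r, H)| \to |Hom(\mathcal{K}_r, \mathcal{K}_n)|$. The symmetric group $S_r$ acts on both complexes by permuting the vertices of the source $\mathcal{K}_r$, and $c_{*}$ is equivariant with respect to this action. Since any graph homomorphism out of $\mathcal{K}_r$ must be injective on vertices, the $S_r$-action is free on cells; in particular, restricting along any transposition yields a free $\mathbb{Z}_2$-action on both spaces.

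The technical heart of the proof is to compute a $\mathbb{Z}_2$-equivariant invariant of the target $|Hom(\mathcal{K}_r, \mathcal{K}_n)|$, specifically to show that its $\mathbb{Z}_2$-index equals $n-r$. I would analyze the cell structure of $Hom(\mathcal{K}_r, \mathcal{K}_n)$, whose cells are parametrized by tuples $(A_1,\dots,A_r)$ of pairwise disjoint nonempty subsets of $[n]$ recording, for each vertex of $\mathcal{K}_r$, the allowed colors. The strategy is then either to exhibit a shelling or a discrete Morse matching whose critical cells are concentrated in dimension $n-r$, or, equivalently, to apply a generalized nerve decomposition relative to the subcomplexes ``vertex $i$ of $\mathcal{K}_r$ receives color $j$''. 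The expected output is a wedge-of-$(n-r)$-spheres model on which the chosen transposition acts essentially antipodally, giving the index $n-r$.

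Granted the index computation, a Borsuk--Ulam-type principle closes the argument: the existence of the equivariant map $c_{*}$, combined with $k$-connectedness of the source, forces $k+1 \leq \mathrm{ind}_{\mathbb{Z}_2}|Hom(\mathcal{K}_r, \mathcal{K}_n)| = n-r$, which rearranges to $\chi(H) \geq k+r+1$. The clear obstacle is the index computation itself: verifying that the top power of the Stiefel--Whitney class of the line bundle associated to the free $\mathbb{Z}_2$-action is nonzero in degree $n-r$ requires the combinatorial machinery of discrete Morse theory and the generalized nerve lemma, without which one has no direct handle on the cohomology of $|Hom(\mathcal{K}_r, \mathcal{K}_n)|$. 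This is exactly the step where the present paper hopes to offer a conceptually lighter alternative.
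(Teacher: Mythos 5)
Your skeleton --- functoriality of $Hom$ applied to a proper coloring $H\to\mathcal{K}_n$, equivariance for a free $\mathbb{Z}_2$-action coming from a transposition, and the inequality $conn+1\leq ind_{\mathbb{Z}_2}$ --- is the classical Babson--Kozlov route and is sound in outline, but as written the proposal has a genuine gap exactly where you flag one: the claim about $ind_{\mathbb{Z}_2}|Hom(\mathcal{K}_r,\mathcal{K}_n)|$ is announced as the output of a shelling, discrete Morse matching, or nerve decomposition that you do not carry out, and a proof cannot stop at its hardest step. Moreover you have misidentified which half of that computation is needed. Showing that the top power of the Stiefel--Whitney class is nonzero proves $ind_{\mathbb{Z}_2}|Hom(\mathcal{K}_r,\mathcal{K}_n)|\geq n-r$, which plays no role in your chain of inequalities; what you actually need is only the \emph{upper} bound $ind_{\mathbb{Z}_2}|Hom(\mathcal{K}_r,\mathcal{K}_n)|\leq n-r$, and that is elementary. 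The elements of $Hom_{p}(\mathcal{K}_r,\mathcal{K}_n)$ are tuples of pairwise disjoint non-empty subsets of $\{1,\dots,n\}$ ordered by componentwise inclusion, so along any strictly increasing chain the total cardinality strictly increases from at least $r$ to at most $n$; hence $\Delta\left(Hom_{p}(\mathcal{K}_r,\mathcal{K}_n)\right)$ has dimension $n-r$. The transposition acts freely (adjacent coordinates are disjoint non-empty sets, so no element, and hence no chain, is fixed), and Proposition 5.4 then gives the bound with no Morse theory or nerve lemma at all. With that substitution your chain $k+1\leq ind_{\mathbb{Z}_2}|Hom(\mathcal{K}_r,H)|\leq ind_{\mathbb{Z}_2}|Hom(\mathcal{K}_r,\mathcal{K}_n)|\leq n-r$ closes correctly.

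Even after this repair, your route is genuinely different from the paper's. The paper never maps into $Hom(\mathcal{K}_r,\mathcal{K}_n)$: it equips $Hom_{p}(\mathcal{K}_r,H)$ with the free cyclic $\mathbb{Z}_r$-action, introduces the compatibility graph $C_P$, and shows (Theorem 8) that a proper $C$-coloring of $C_P$ produces a simplicial $\mathbb{Z}_r$-equivariant map $\Delta(P)\to\Delta\left(\mathbb{Z}_r\times\{1,\dots,C-r+1\}\right)$, whence $ind_{\mathbb{Z}_r}|Hom(\mathcal{K}_r,H)|+r\leq\chi(C_P)\leq\chi(H)$. There the summand $r$ is the order of the acting group, whereas in your argument it is the codimension of the target complex. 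The paper's method applies uniformly to any finite free $G$-poset, which is how it also handles even cycles with $G=\mathbb{Z}_2$; yours leans on a specific structural feature of $Hom(\mathcal{K}_r,\mathcal{K}_n)$ and does not obviously generalize beyond complete test graphs.
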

To provide a better lower bound on the chromatic number of graphs, Lov\'{a}sz made the following conjecture:
\begin{conjecture}[Lov\'{a}sz]
Let $C_{2r+1}$ be the odd cycle with $2r+1$ vertices. If $|Hom(C_{2r+1}, H)|$ is $k$-connected, then $\chi (H)\geq  k+4$.
\end{conjecture}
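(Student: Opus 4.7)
The plan is to transfer the machinery the paper uses for complete graphs and even cycles to the odd-cycle setting by building an equivariant version of the new topological bound. The crucial observation is that $C_{2r+1}$, despite not being vertex-critical in the way that $\mathcal{K}_r$ or $C_{2r}$ is, nevertheless carries an order-two symmetry---reflection about any vertex---whose geometric consequence for $|Hom(C_{2r+1},H)|$ should supply an additional unit of connectivity beyond what the unadorned bound of the paper detects. That extra unit is precisely what distinguishes the predicted bound $\chi(H) \geq k+4$ from the naive $\chi(H) \geq k+3$ one would get by treating $C_{2r+1}$ like an arbitrary triangle-free graph.

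Concretely, I would proceed as follows. First, fix an involution $\sigma$ on $C_{2r+1}$ (reflection about a vertex $v_0$) and let $\mathbb{Z}/2$ act on $|Hom(C_{2r+1}, H)|$ by pre-composition with $\sigma$; a proper $n$-coloring of $H$ then yields a $\mathbb{Z}/2$-equivariant map $|Hom(C_{2r+1}, H)| \to |Hom(C_{2r+1}, \mathcal{K}_n)|$. Second, combine $k$-connectivity with a Smith-theoretic or equivariant-obstruction argument to show $\mathrm{ind}_{\mathbb{Z}/2}\,|Hom(C_{2r+1}, H)| \geq k+2$. Third, establish the matching upper bound $\mathrm{ind}_{\mathbb{Z}/2}\,|Hom(C_{2r+1}, \mathcal{K}_n)| \leq n-2$; combining the two estimates via the equivariant map forces $n \geq k+4$.

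The crux is the third step, which is exactly where the odd-cycle case diverges from the even-cycle case treated in the paper. For $C_{2r}$ one can deformation retract $|Hom(C_{2r}, \mathcal{K}_n)|$ onto a small, combinatorially tractable subcomplex compatible with the action; for $C_{2r+1}$ no such retract seems to exist, because the fixed edge of $\sigma$ obstructs the obvious collapses. My plan would be to filter $|Hom(C_{2r+1}, \mathcal{K}_n)|$ by the color assigned to the fixed vertex $v_0$, apply the paper's non-equivariant bound slice by slice to each piece (which is essentially a Hom complex of a shorter path), and then reassemble by an equivariant Mayer--Vietoris or spectral-sequence argument that remembers the $\sigma$-action on the gluing data. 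If the filtration pieces line up uniformly enough under the action, the index estimate follows and the conjecture with it. If not, one is driven toward tracking a characteristic class of the associated $\mathbb{Z}/2$-bundle on the Hom complex, and this is where I would expect the main technical difficulty to concentrate.
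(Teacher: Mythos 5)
First, be aware that the paper does not prove this statement: it appears there only as a quoted conjecture of Lov\'{a}sz, and the text immediately notes that it was confirmed by Babson and Kozlov in a separate (and difficult) paper. So the benchmark is the Babson--Kozlov argument, whose overall architecture --- a $\mathbb{Z}_2$-equivariant map into $|Hom(C_{2r+1},\mathcal{K}_n)|$ followed by an index or characteristic-class estimate on the target --- your sketch correctly reproduces. The problem is that your two quantitative steps do not add up, and the one carrying all the difficulty is left as a plan. From $k$-connectedness, Proposition 5(3) gives only $\mathrm{ind}_{\mathbb{Z}_2}|Hom(C_{2r+1},H)|\geq k+1$, not $k+2$: a $k$-connected free $\mathbb{Z}_2$-space can have index exactly $k+1$ (the antipodal sphere $\mathbb{S}^{k+1}$), so no Smith-theoretic argument extracts an extra unit from connectivity alone. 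With the correct source bound $k+1$, your claimed target bound $\mathrm{ind}_{\mathbb{Z}_2}|Hom(C_{2r+1},\mathcal{K}_n)|\leq n-2$ yields only $n\geq k+3$, the ``naive'' bound you set out to beat. Indeed $k+3$ is exactly what the paper's own Theorem 8 already delivers for odd cycles: the reflection about a vertex acts freely on $Hom_{p}(C_{2r+1},H)$ --- freeness coming from the edge that the reflection inverts rather than fixes pointwise (there is no fixed edge, contrary to your description) --- and the compatibility graph maps homomorphically to $H$ via the coordinate at an endpoint of that inverted edge. The entire content of the conjecture is therefore concentrated in the sharper estimate $\mathrm{ind}_{\mathbb{Z}_2}|Hom(C_{2r+1},\mathcal{K}_n)|\leq n-3$, or its cohomological surrogate (vanishing of the appropriate power of the Stiefel--Whitney class $\varpi_1$).

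That estimate is precisely the theorem of Babson and Kozlov, and your proposed route to it --- filter by the color of the reflection-fixed vertex, apply the non-equivariant bound slice by slice, reassemble by equivariant Mayer--Vietoris --- is not carried out and faces an obstruction you acknowledge but do not resolve: the slices are not individually compatible with the $\mathbb{Z}_2$-action in a way that lets you both quote a non-equivariant bound and retain the equivariant information, and the reassembly is exactly where the parity of $n$ enters (the known proofs split into a comparatively easy even-$n$ case and a hard odd-$n$ case requiring either a full spectral-sequence computation of the cohomology of $Hom(C_{2r+1},\mathcal{K}_n)$ or Schultz's identification of these complexes with spaces related to Stiefel manifolds). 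Until that step is actually proved, what you have is a correct statement of the known proof architecture, off by one in both of its quantitative halves, with the hard theorem assumed rather than established.
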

More generally, Bj\"{o}rner and Lov\'{a}sz made the following conjecture to generalize the concept of a topological obstruction to graph coloring. 
\begin{conjecture}[Bj\"{o}rner and Lov\'{a}sz]
If $|Hom(T, H)|$ is $k$-connected, then $\chi (H)\geq  k+\chi (T) +1$.
\end{conjecture}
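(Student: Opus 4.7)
Let $n = \chi(H)$ and fix a proper $n$-coloring $c : H \to \mathcal{K}_n$. Since $Hom(T,-)$ is a covariant functor in its second argument, $c$ induces a continuous map
\[
c_* : |Hom(T,H)| \to |Hom(T, \mathcal{K}_n)|,
\]
which is equivariant with respect to the natural action of $Aut(T)$ on both sides. The plan is to deduce $k \le n - \chi(T) - 1$ by comparing an equivariant topological invariant, an ``index'', across $c_*$; this reduces the conjecture to a statement about the target $|Hom(T, \mathcal{K}_n)|$, which depends only on $T$ and $\chi(H)$.

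Concretely, I would pick a subgroup $G \le Aut(T)$ acting sufficiently freely on $|Hom(T,H)|$ (at least away from a low-dimensional subcomplex), and establish two facts. First, the standard equivariant connectivity principle: a $k$-connected, essentially free $G$-space has $G$-index at least $k+1$. Second, an upper bound $\mathrm{ind}_{G}\,|Hom(T,\mathcal{K}_n)| \le n - \chi(T)$, witnessed by an explicit $G$-equivariant map from $|Hom(T, \mathcal{K}_n)|$ into the unit sphere of a $G$-representation of real dimension $n - \chi(T) + 1$. Combined with the equivariance of $c_*$, these yield
\[
k + 1 \;\le\; \mathrm{ind}_{G}\,|Hom(T,H)| \;\le\; \mathrm{ind}_{G}\,|Hom(T,\mathcal{K}_n)| \;\le\; n - \chi(T),
\]
which is exactly the desired bound.

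The main obstacle is the upper-bound step: constructing, \emph{for arbitrary} $T$, a canonical $G$-equivariant map into a sphere whose dimension captures $\chi(T)$ exactly. For $T = \mathcal{K}_r$ the vertex set of $T$ itself carries the necessary symmetric structure (permutation of coordinates plus a colored-simplex map), and for $T = C_{2r}$ a winding-number or $\mathbb{Z}_2$-sign trick on the cycle suffices. In general, however, $Aut(T)$ can be small or even trivial, and no evident group acts freely on $|Hom(T,\mathcal{K}_n)|$. A natural workaround is to enlarge the setup---for instance, to replace $T$ by $T \times \mathcal{K}_2$ so that a $\mathbb{Z}_2$-swap is always available, or to detect the obstruction cohomologically via a universal class in $H^{n-\chi(T)}\bigl(|Hom(T,\mathcal{K}_n)|\bigr)$ with appropriately twisted coefficients, which one would then pull back along $c_*$ and contradict using the $k$-connectivity of the source. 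Producing such a class \emph{functorially in $T$}, using only the hypothesis $\chi(T) = t$ and nothing about the automorphism group of $T$, is the step I expect to be genuinely hard; it is precisely where earlier attempts have stalled and is the reason the full Bj\"{o}rner--Lov\'{a}sz conjecture has so far only been established for special families.
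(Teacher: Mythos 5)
The statement you are trying to prove is false, so no proof attempt can succeed. The paper itself records this: the Bj\"{o}rner--Lov\'{a}sz conjecture ``was disproved by Hoory and Linial''~\cite{hoory2005counterexample}, who exhibited a graph $T$ for which the $k$-connectedness of $|Hom(T,H)|$ does not force $\chi(H)\geq k+\chi(T)+1$. This is why the subsequent literature (and this paper) works instead with the notion of a \emph{test graph}: a graph $T$ for which the implication does hold, the point being that not every graph is one. Your proposal, to its credit, isolates exactly the step that cannot be carried out --- producing, for \emph{arbitrary} $T$, an equivariant map from $|Hom(T,\mathcal{K}_n)|$ into a space of index $n-\chi(T)$, or a universal obstruction class in degree $n-\chi(T)$ depending only on $\chi(T)$. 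But this step is not merely ``genuinely hard''; the counterexample shows it is impossible in general. Any invariant that depends on $T$ only through $\chi(T)$ would prove the conjecture for all $T$, contradicting Hoory--Linial.

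What the paper actually does is restrict to the situations where your strategy \emph{can} be completed: it requires a nontrivial group $G$ acting \emph{freely} on the Hom poset (cyclic shifts for $T=\mathcal{K}_r$, the reflection for $T=C_{2r}$), builds the compatibility graph $C_P$ of the free $G$-poset $P=Hom_p(T,H)$, and proves $ind_G|\Delta(P)|+|G|\leq\chi(C_P)\leq\chi(H)$ (Theorem~8). This is essentially the equivariant-index comparison you sketch, but with the free action as a standing hypothesis rather than something to be manufactured for every $T$ --- and that hypothesis is exactly what fails for the counterexample graphs. If you want a correct statement to prove, aim at Theorem~8 or at the corollaries for $\mathcal{K}_r$ and $C_{2r}$, not at Conjecture~4.
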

The first conjecture was confirmed by Babson and Kozlov~\cite{babson2007proof}, but the second one was disproved by Hoory and Linial~\cite{hoory2005counterexample}. Regarding Conjecture $2$, the following definition was proposed by Kozlov in~\cite{kozlov2007combinatorial}. A graph $T$ is called a (homotopy) test graph if for every graph $H$, the $k$-connectedness of $|Hom(T, H)|$ implies $\chi (H)\geq k + 1 + \chi(T)$. So in this terminology, complete graphs and odd cycles are test graphs. There are many more test graphs known besides these, see~\cite{schultz2009graph, schultz2011equivariant}.
 
In this paper, we propose a new topological lower bound for the chromatic number of a special family of graphs. 
Moreover, as an application of this bound, we give a new proof of the fact that complete graphs and even cycles are test graphs. One can interpret our approach as a combinatorial method to find test graphs. We hope that the proposed technique can be effectively used for finding new test graphs.

The organization of the paper is as follows. In Section $2$, we review some standard facts on simplicial complexes, partially ordered sets, and $G$-spaces. 
Finally, in Section $3$, our main results are stated and proved.

\section{Preliminaries and Notations}
In this paper, all graphs are finite, simple and undirected. Here and subsequently, $G$ stands for a non-trivial finite group, and its identity element is denoted by $e$. The following is a brief overview of some of the basic concepts we need.

\subsection{$G$-spaces and $G$-equivariant maps}
If $X$ is a set, then a group action of $G$ on $X$ is a function $G\times X\to X$ denoted $(g, x)\mapsto g\cdot x$, such that $e\cdot x = x$ and $(gh)\cdot x = g\cdot (h\cdot x)$ for all $g, h\in G$ and all $x\in X$. If $X$ is a topological space and $G$ is a topological group, then $X$ is called a $G$-space if $G$ acts continuously on $X$. If, moreover, the action is free, i.e, for all $x\in X$, $g\cdot x=x$ implies $g=e$, $X$ is called a free $G$-space. If $X$ and $Y$ are $G$-spaces, a continuous map $f: X\to Y$ is a $G$-equivariant map if $f(g\cdot x) = g\cdot f(x)$ for all $g\in G$ and all $x\in X$. 
\subsection{Simplicial complexes and $G$-simplicial complexes}
We assume that the reader is familiar with standard definitions and concepts of simplicial complexes. We just recall here the main definitions and notations used in the paper. For more background, see \cite{matousek2008using, wachs2006poset}. A (finite) simplicial complex $\mathcal{K}$ is a non-empty, hereditary set system of finite sets. That is, $F\in \mathcal{K}$, $F^{\prime}\subset F$ implies $F^{\prime}\in \mathcal{K}$ and we have $\emptyset\in \mathcal{K}$. The union of all elements of $\mathcal{K}$ is denoted by $V(\mathcal{K})$. The element of $V(\mathcal{K})$ are called vertices of $\mathcal{K}$, and the elements of $\mathcal{K}$ are called the simplicies of $\mathcal{K}$. The dimension of a simplex $\sigma\in \mathcal{K}$ is $\dim (\sigma)= |\sigma|-1$. The dimension of $\mathcal{K}$ is the maximum dimension of the simplices in $\mathcal{K}$. We denote the geometric realization of $\mathcal{K}$ by $|\mathcal{K}|$. A map $f : V(\mathcal{K})\to V(\mathcal{L})$ is called simplicial if it maps any simplex to a simplex, that is, $\sigma\in\mathcal{K}$ implies $f(\sigma)\in\mathcal{L}$. Every simplicial mapping $f:\mathcal{K}\to\mathcal{L}$ can be extended linearly to get a continuous mapping $|f| : |\mathcal{K}|\to |\mathcal{L}|$, which is called the affine extension of $f$. 

A simplicial $G$-complex is a simplicial complex together
with an action of $G$ on its vertices that takes simplices to simplices. Note that if $\mathcal{K}$ is a simplicial $G$-complex then the geometric realization $|\mathcal{K}|$ is a $G$-space under the natural induced action of $G$. Moreover, if the induced action of $G$ on $|\mathcal{K}|$ is free, then $\mathcal{K}$ is called a free simplicial $G$-complex. For two simplicial $G$-complexes $\mathcal{K}$ and $\mathcal{L}$, a simplicial map $f: V(\mathcal{K})\to V(\mathcal{L})$ is called a simplicial $G$-equivariant map if $f(g\cdot x)=g\cdot f(x)$ for all $g\in G$ and all $x\in V(\mathcal{K})$. One can easily see that, the affine extension of a simplicial $G$-equivariant map is a $G$-equivariant map.  
\subsection{Partially ordered sets and $G$-posets}
A partially ordered set or poset is a set and a binary relation $\leq$ such that for all $a, b, c\in P$:
$a\leq a$ (reflexivity); $a\leq b$ and $b\leq c$ implies $a\leq c$ (transitivity); and $a\leq b$ and $b\leq a$ implies $a = b$ (anti-symmetry). A pair of elements $a, b$ of a partially order set are called comparable if $a\leq b$ or $b\leq a$. A subset of a poset in which each two elements are comparable is called a chain. A function $f : P\to Q$ between partially ordered sets is order-preserving or monotone, if for all $a$ and $b$ in $P$, $a\leq_{P} b$ implies $f(a)\leq_{Q} f(b)$. The order complex of a poset $P$ is the simplicial complex $\Delta (P)$, whose vertices are the elements of $P$ and whose simplices are all chains in $P$.

A $G$-poset is a poset together with a $G$-action on its elements that preserves the partial order, i.e, $x<y \Rightarrow g\cdot x<g\cdot y$. A $G$-poset $P$ is called free $G$-poset, if for all $x$ in $X$, $g\cdot x=x$ implies $g=e$. One can see that, if $P$ is a free $G$-poset then its order complex $\Delta (P)$ is a free simplicial $G$-complex.
\subsection{Connectivity and $G$-index}
Let $k\geq -1$. A topological space $X$ is called $k$-connected if for every $-1\leq m\leq k$, each continuous mapping of the $m$-dimensional sphere $\mathbb{S}^m$ into $X$ can be extended to a continuous mapping of the $(m+1)$-dimensional ball $\mathbb{B}^{m+1}$. Here $\mathbb{S}^{-1}$ is interpreted as $\emptyset$ and $\mathbb{B}^0$ as a single point, and so $(-1)$-connected means nonempty. The largest $k$, if it exists, such that $X$ is $k$-connected is called the connectivity of $X$, denoted by $conn(X)$. If $X$ is $k$-connected for every $k\geq -1$, then we set $conn(X)=\infty$. A simplicial complex is called $k$-connected if its geometric realization is $k$-connected. For an integer $n\geq 0$ and a group $G$, an $\mathbb{E}_nG$ space is the geometric realization of an $(n-1)$-connected free $n$-dimensional simplicial $G$-complex. For a $G$-space $X$, we define
$$ind_{G} X = \min\{n |\,\text{there is a}\,G\text{-equivariant map}\, X\to \mathbb{E}_nG\}.$$
Note that the value of $ind_{G} X$ is independent of which $\mathbb{E}_nG$ space is chosen, since any of them $G$-equivariantly maps into any other, see~\cite[section 6.2]{matousek2008using} for details. In the following, we introduce a concrete example of an $\mathbb{E}_nG$ space that we will use in this paper. Let $G\times \{1,\cdots , n+1\}$ be the $G$-poset with its natural $G$-action, $h\cdot (g, i)\to (hg, i)$, and the order defined by $(h, x) < (g, y)$ if $x < y$. One can see that the geometric realization of order complex of this $G$-poset, $|\Delta (G\times\{1, 2,\cdots ,n+1\})|$, is an example of $\mathbb{E}_nG$ space. Let us finish this section by listing some basic properties of $ind_{G} X$. 

\begin{proposition}[\cite{matousek2008using}]
Let $G$ be a finite nontrivial group, and let $X, Y$ be $G$-spaces.
\begin{enumerate}
    \item If there is $G$-map from $X$ to $Y$, we have $ind_{G} X\leq ind_{G} Y$.
    \item For any $\mathbb{E}_nG$ spaces, $ind_{G} \mathbb{E}_nG = n$.
    \item $conn(X)+1\leq ind_{G} X$.
    \item If $\mathcal{K}$ is a free $G$-simplicial complex of dimension $n$, then $ind_G |\mathcal{K}|\leq n$.
\end{enumerate}
\end{proposition}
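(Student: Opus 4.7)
The plan is to prove the four parts in an order that respects their logical dependencies. Part 1 is purely formal, Part 4 is a direct construction, while Parts 2 and 3 are intertwined through an equivariant Borsuk--Ulam-type statement that will be the main obstacle.

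Part 1 is immediate: given a $G$-map $f:X\to Y$ and a $G$-map $g:Y\to\mathbb{E}_nG$ realizing $n=ind_GY$, the composition $g\circ f$ is a $G$-map, so $ind_GX\le n$. For Part 4, I would use the concrete model $\mathbb{E}_nG=|\Delta(G\times\{1,\ldots,n+1\})|$ recalled in the excerpt. Passing to the barycentric subdivision $sd(\mathcal{K})$, whose vertices are the simplices of $\mathcal{K}$, I send a vertex $\sigma$ to $(g_\sigma,\dim\sigma+1)$, where $g_\sigma$ is determined by choosing one representative per $G$-orbit of simplices (setting $g_\sigma=e$ on each representative) and extending equivariantly. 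Freeness of the $G$-action on $|\mathcal{K}|$ forces any $g$ satisfying $g\cdot\sigma=\sigma$ to be trivial (otherwise it fixes the barycenter of $\sigma$), so this assignment is well defined and the $G$-action on $sd(\mathcal{K})$ is free. A simplex of $sd(\mathcal{K})$ is a strictly increasing chain of at most $n+1$ simplices of $\mathcal{K}$, so the dimension coordinates are pairwise distinct and the image is a chain in the target poset; this yields the desired $G$-equivariant simplicial map and hence $ind_G|\mathcal{K}|\le n$.

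For Part 3, given a $k$-connected $G$-space $X$, I would construct a $G$-map $\mathbb{E}_{k+1}G\to X$ by equivariant obstruction theory on the free $G$-CW structure of $\mathbb{E}_{k+1}G$. Choosing one cell per free $G$-orbit, I define the map on representatives and extend equivariantly; the obstruction to extending across the orbit of an $i$-cell reduces to an obstruction in $\pi_{i-1}(X)$, which vanishes for all $i\le k+1$ by $k$-connectedness. Combining this with Part 1 and the identity $ind_G\mathbb{E}_{k+1}G=k+1$ from Part 2 yields $conn(X)+1\le ind_GX$.

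The upper bound $ind_G\mathbb{E}_nG\le n$ in Part 2 is witnessed by the identity map. The lower bound $ind_G\mathbb{E}_nG\ge n$---equivalently, the non-existence of a $G$-map $\mathbb{E}_nG\to\mathbb{E}_mG$ for $m<n$---is the genuine Borsuk--Ulam input and the main hurdle. My plan is to handle it via a characteristic-class argument on the Borel construction: the fibration $\mathbb{E}_nG\to EG\times_G\mathbb{E}_nG\to BG$ combined with a nontrivial class in $H^n(BG)$ produces a nonzero obstruction class in degree $n$, which any $G$-map to an $m$-dimensional complex would force to vanish. Once this cohomological step is in place, Parts 2 and 3 settle together and the whole proposition follows.
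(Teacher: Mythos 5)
The paper itself gives no argument for this proposition: it is imported wholesale from Matou\v{s}ek's book, so there is no internal proof to compare against and your reconstruction has to stand on its own. Parts 1 and 4 of your proposal are complete and correct, and Part 4 is exactly the standard argument (barycentric subdivision, $\sigma\mapsto(g_\sigma,\dim\sigma+1)$, with freeness on simplices deduced from the fixed barycenter); your Part 3, building a $G$-map $\mathbb{E}_{k+1}G\to X$ skeleton by skeleton on orbit representatives and killing the extension obstructions in $\pi_{i-1}(X)$ for $i\le k+1$, is also the standard and correct route, and it correctly reduces to Part 1 and the lower bound of Part 2.

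The one genuine gap is in your plan for $ind_G\,\mathbb{E}_nG\ge n$. As stated, it hinges on ``a nontrivial class in $H^n(BG)$,'' and with integral coefficients this simply fails for many pairs $(G,n)$: already for $G=\mathbb{Z}_2$ one has $H^n(B\mathbb{Z}_2;\mathbb{Z})=0$ for every odd $n\ge 1$, so your obstruction class does not exist and the argument collapses in exactly the cases it is supposed to handle. The standard repair is to first pass to a subgroup: by Cauchy's theorem a nontrivial finite $G$ contains some $\mathbb{Z}_p$, the restricted action on $\mathbb{E}_nG$ and on the putative target $\mathbb{E}_mG$ is still free, and $H^*(B\mathbb{Z}_p;\mathbb{F}_p)$ is nonzero in \emph{every} degree. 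With $\mathbb{F}_p$ coefficients the Leray--Serre spectral sequence of $X\to E\mathbb{Z}_p\times_{\mathbb{Z}_p}X\to B\mathbb{Z}_p$ for an $(n-1)$-connected $X$ shows the edge map $H^n(B\mathbb{Z}_p;\mathbb{F}_p)\to H^n(X/\mathbb{Z}_p;\mathbb{F}_p)$ is injective, while factoring through the $m$-dimensional quotient $\mathbb{E}_mG/\mathbb{Z}_p$ with $m<n$ would force it to vanish; that is the contradiction you want. With that substitution (subgroup of prime order plus field coefficients) your outline closes up, but without it the key step of Part 2, and hence also Part 3 which depends on it, is not yet proved.
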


\section{Compatibility graphs and Test graphs}
We begin this section by introducing compatibility graphs.
\begin{definition}[Compatibility graph]
Let $P$ be a $G$-poset. The compatibility graph of $P$, denoted by $C_P$, is the graph $C_P$ with vertex set $P$, and two elements $x, y\in P$ are adjacent if there is an element $g\in G\setminus\{e\}$ such that $x$ and $g\cdot y$ are comparable in $P$.
\end{definition}
In general, it needs to be noted that $C_p$ may have loops, and therefore it is not colorable! So,
we need to limit ourselves to those $G$-posets whose corresponding graphs do not contain any loops. In the following lemma, it can be seen that if $P$ is "nice" enough, then $C_P$ contains no loops.
\begin{lemma}
If $P$ is a free $G$-poset, then $C_P$ contains no loops.
\end{lemma}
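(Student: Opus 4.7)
The plan is to argue by contrapositive: I will show that if $C_P$ has a loop at some vertex $x \in P$, then the $G$-action on $P$ cannot be free. A loop at $x$ means, by the definition of $C_P$, that there exists $g \in G \setminus \{e\}$ such that $x$ and $g\cdot x$ are comparable in $P$. So the goal becomes: derive $g \cdot x = x$ from this comparability, which contradicts freeness.

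My strategy is to iterate the action of $g$ and exploit order-preservation together with the finiteness of $G$. Without loss of generality assume $x \leq g \cdot x$ (the case $g \cdot x \leq x$ is symmetric, handled by replacing $g$ with $g^{-1}$). Since the $G$-action preserves the order of $P$, applying $g^i$ to both sides of $x \leq g \cdot x$ yields $g^i \cdot x \leq g^{i+1}\cdot x$ for every $i \geq 0$. Chaining these inequalities produces an ascending chain
\[
x \leq g\cdot x \leq g^2\cdot x \leq \cdots \leq g^n\cdot x,
\]
for any $n \geq 0$.

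Now I bring in finiteness: because $G$ is finite, $g$ has finite order $n \geq 1$, so $g^n = e$ and hence $g^n \cdot x = x$. Then the chain above closes up: $x \leq g\cdot x \leq \cdots \leq g^n\cdot x = x$. Antisymmetry of the partial order forces every term in the chain to equal $x$; in particular $g \cdot x = x$ with $g \neq e$. This directly contradicts the hypothesis that $P$ is a free $G$-poset, and the lemma follows.

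I don't foresee a real obstacle here: the argument is a standard orbit-chain trick, and the only ingredients needed are (i) the $G$-action preserves $\leq$, (ii) $G$ is finite so every element has finite order, and (iii) antisymmetry of $\leq$. The one small bookkeeping point is handling the case $g\cdot x \leq x$ symmetrically, which I would dispatch by noting that applying the same argument to $g^{-1}$ (also a nonidentity element) reduces it to the first case.
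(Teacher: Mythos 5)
Your argument is correct and is essentially the paper's own proof: iterate the action of $g$ on the comparability $x \le g\cdot x$ to close up a cyclic chain using the finite order of $g$, then conclude by antisymmetry (the paper instead notes $x \neq g\cdot x$ up front, works with strict inequalities, and derives $x < x$ by transitivity, but this is the same orbit-chain idea). The only cosmetic differences are your contrapositive framing and using the order of $g$ rather than $|G|$ as the exponent.
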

\begin{proof}
Suppose, the contrary to our claim, that $C_P$ contains a loop. Let $x$ be a vertex of $C_P$ which is connected to itself. By the definition of compatibility graph, there is a $e\neq g\in G$ such that $x$ and $g\cdot x$ are comparable in $P$. Since $P$ is free, $x\neq g\cdot x$. Without loss of generality suppose that $x < g\cdot x$. If we multiply the both sides of the previous inequality by $e, g, \cdots , g^{|G|-1}$, respectively, we will find
\begin{align*}
x &< g\cdot x\\
g\cdot x &< g^2\cdot x\\
&\vdots\\
g^{|G|-1}\cdot x &< g^{|G|}\cdot x=x.
\end{align*}
Now, by the transitivity of $<$, $x < x$, which is impossible.
\end{proof}
Now we are in a position to state the main result of the paper.
\begin{theorem}
If $P$ is a finite free $G$-poset, 
then
$$ind_{G} |\Delta (P)|+ |G|\leq\chi\left(C_P\right).$$
\end{theorem}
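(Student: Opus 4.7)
The plan is to construct an explicit $G$-equivariant simplicial map
\[
\phi : \Delta(P) \longrightarrow \Delta\bigl(G \times \{1, \dots, n - |G| + 1\}\bigr),
\]
where $n := \chi(C_P)$ and the target is the concrete model of $\mathbb{E}_{n-|G|}G$ described at the end of Section~2.4. Once such a $\phi$ is in hand, parts (1) and (2) of Proposition~1 immediately give $\mathrm{ind}_{G}|\Delta(P)| \leq n - |G|$, which rearranges to the stated inequality.

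To build $\phi$, I first fix a proper coloring $c : P \to \{1, \dots, n\}$ and observe that, because the action is free, for every $g \neq e$ the elements $x$ and $g \cdot x$ are adjacent in $C_P$: the group element $g^{-1}$ witnesses that $x$ is comparable (in fact equal) to $g^{-1} \cdot (g \cdot x) = x$. Hence every orbit of $P$ forms a clique of size $|G|$ in $C_P$ whose vertices receive $|G|$ distinct colors, so the minimum color on each orbit lies in $\{1, \dots, n - |G| + 1\}$. For each orbit $O$, let $x_O$ denote its unique minimum-color element; for $x \in P$, write $x^{\ast} := x_{Gx}$, set $r(x) := c(x^{\ast})$, and let $\phi_1(x)$ be the unique $g \in G$ with $x = g \cdot x^{\ast}$. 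Finally define $\phi(x) := (\phi_1(x), r(x))$. By construction $r$ is $G$-invariant and $\phi_1$ is $G$-equivariant, so $\phi$ is $G$-equivariant on vertices.

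The main obstacle will be checking that $\phi$ sends chains in $P$ to chains in the target. Two elements $(h, i)$ and $(g, j)$ of the target are comparable iff $i \neq j$ or $(h, i) = (g, j)$, so it suffices to prove that whenever $x < y$ in $P$ and $r(x) = r(y)$, one has $\phi(x) = \phi(y)$. Writing $x = a \cdot x^{\ast}$ and $y = b \cdot y^{\ast}$, the relation $x < y$ rewrites as $x^{\ast} < a^{-1}b \cdot y^{\ast}$. If $a \neq b$, then $a^{-1}b \neq e$ exhibits $x^{\ast}$ and $y^{\ast}$ as adjacent in $C_P$, while $r(x) = r(y)$ forces $c(x^{\ast}) = c(y^{\ast})$, contradicting the properness of $c$. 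Hence $a = b$, which together with $r(x) = r(y)$ gives $\phi(x) = \phi(y)$. This single case analysis is precisely where the combinatorial assumption ``$c$ is a proper coloring of $C_P$'' is converted into topological information, and it is the delicate step of the argument. Once it is established, the affine extension of $\phi$ provides the required continuous $G$-equivariant map $|\Delta(P)| \to \mathbb{E}_{n-|G|}G$, and Proposition~1 closes the proof.
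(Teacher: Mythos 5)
Your proposal is correct and follows essentially the same route as the paper: you build the same equivariant simplicial map into $\Delta(G\times\{1,\dots,\chi(C_P)-|G|+1\})$ (your $\phi(x)=(\phi_1(x),c(x^\ast))$ is the paper's $\lambda(x)=(g_x^{-1},c(g_x\cdot x))$ in different notation), use the same observation that orbits are $|G|$-cliques, and verify simpliciality by the same properness argument before invoking the monotonicity and normalization properties of $\mathrm{ind}_G$. The only cosmetic difference is that you explicitly note the minimum orbit color lands in $\{1,\dots,\chi(C_P)-|G|+1\}$, a point the paper leaves implicit.
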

\begin{proof}
By Lemma $7$ the compatibility graph $C_P$ contains no loops, and thus it is colorable.
Let $c : C_P\to\{1, \cdots , C\}$ be a proper coloring of $C_P$ with $C$ colors. This coloring induces a simplicial $G$-equivariant map as follows.
\begin{align*}
\lambda : \Delta (P) &\to \Delta (G\times\{1,\cdots , C-|G|+1\})\\
&x\longmapsto (\lambda_1(x), \lambda_2(x)).
\end{align*} 
First note that $C-|G|+1\geq 1$, since for each $x\in P$ the vertices in $\{g\cdot x| g\in G\}$ form a clique of size $|G|$ in $C_P$, therefore $C\geq |G|$. For a given $x$ in $P$, let $g_x\cdot x$ be the one among all $g\cdot x$, $g\in G$ with the minimum color value in $C_P$. That is, $c(g_x\cdot x) = \min\{c\left(g\cdot x\right) : g\in G\}$. Note that since every two distinct vertices in $\{g\cdot x : g\in G\}$ are adjacent, the element $g_x\cdot x$ is uniquely determined. Now, set
$\lambda (x)= \left(g_x^{-1}, c\left(g_x\cdot x\right)\right).$
To show that $\lambda$ is a simplicial $G$-equivariant map, we need to check:
\begin{enumerate}
    \item For each $g\in G$ and $x\in P$, $\lambda(g\cdot x)=g\cdot \lambda(x)$.
    \item 
    We have to show that $\lambda$ is a simplicial map, i.e., takes any simplex $\sigma = \{p_1 < \cdots < p_m\}$ in $\Delta (P)$ to a simplex $\lambda (\sigma) = \{\lambda(p_1) = (\lambda_1(p_1), \lambda_2(p_1)), \cdots, \lambda(p_m) = (\lambda_1(p_m), \lambda_2(p_m))\}$ in $\Delta(G\times\{1, \cdots ,C-|G|+1\})$. Note that, according to the definition of $\Delta(G\times\{1, \cdots ,C-|G|+1\})$, $\lambda (\sigma)$ is a simplex if and only if it contains no two different elements with the same second entries. Thus, $\lambda$ is a simplicial map if and only if for all comparable elements $x, y$ in $P$, if $\lambda_2 (x)=\lambda_2(y)$, then $\lambda_1(x)=\lambda_1(y)$.
\end{enumerate}
Let $x\in P$ and $g\in G$. By the definition of $\lambda$, there exists a $g_x\in G$ such that $\lambda(x) = (g_x^{-1}, c(g_x\cdot x))$. Since $g_x\cdot x= g_xg^{-1}(g\cdot x)$ and $g_x\cdot x$ is uniquely determined in $C_P$, therefore 
$$\lambda(g\cdot x) = \left({\left(g_xg^{-1}\right)}^{-1} , c(g_xg^{-1}(g\cdot x))\right) = \left(gg_x^{-1} , c(g_x\cdot x)\right)=g\cdot \lambda(x).$$
Now, let $x$ and $y$ be comparable elements of $P$. Also, let $\lambda(x) = (g_x^{-1}, c(g_x\cdot x))$, $\lambda(y) = (g_y^{-1}, c(g_y\cdot y))$, and $c(g_x\cdot x) = c(g_y\cdot y)$. To finish the proof, we need to show that $g_x=g_y$. Suppose, contrary to our claim, that is $g_x\neq g_y$. 
Since $x$ and $y$ are comparable, then $g_x\cdot x$ and $g_x\cdot y$ are also comparable. Moreover $g_x\cdot y = (g_xg_y^{-1})g_y\cdot y$, therefore $g_x\cdot x$ and $g_y\cdot y$ are adjacent as $g_xg_y^{-1}\neq e$. This contradicts the fact that $c$ is a proper coloring of $C_P$.

Therefore, $\lambda$ is a $G$-simplicial map. Its map naturally lifts to the $G$-equivariant map between $G$-spaces $|\Delta (P)|$ and $|\Delta\left (G\times\{1,\cdots , C-|G|+1\}\right)|$. Thus, according to Proposition $5$, $ind_{G} |\Delta (P)|\leq ind_{G} |\Delta\left (G\times\{1,\cdots , C-|G|+1\}\right)| = C-|G|$. Therefore,
$$ind_{G} |\Delta (P)|+ |G|\leq\chi (C_P).$$
\end{proof}
Now, let us mention two interesting consequences of the theorem. First, let us recall the definition of the Hom complex. As we pointed out, this concept was defined by Lov\'{a}sz and it has been studied by many authors to provide topological lower bounds on the chromatic number of graphs. We need the following version of this concept.
\begin{definition}[Hom poset and Hom complex]
Let $F$ be a graph with vertex set $\{1, 2\cdots , n\}$. For a graph $H$, we define $Hom_{p}(F, H)$, Hom poset, to be the poset whose elements are given by all $n$-tuples $(A_1, \cdots, A_n)$ of non-empty subsets of $V(H)$, such that for any edge $(i, j)$ of $F$ we have $A_i\times A_j\subseteq E(H)$. The partial order is defined by $A=(A_1,\cdots, A_n)\leq B=(B_1,\cdots, B_n)$ if and only if $A_i\subseteq B_i$ for all $i\in\{1,\cdots , n\}$. We define the Hom complex $Hom(F, H)$ as the order
complex of the resulting poset.
\end{definition}
Let $\mathbb{Z}_r=\{e=\omega^0,\cdots, \omega^{r-1}\}$ be the cyclic group of order $r$, and let $\mathcal{K}_r$ be the complete graph. The cyclic group $\mathbb{Z}_r$ acts on the poset $Hom_{p}(\mathcal{K}_r, H)$ naturally by cyclic shift. More precisely, for each $\omega^{i}\in \mathbb{Z}_r$ and $(A_1,\cdots, A_r)\in Hom_{p}(\mathcal{K}_r, H)$ , define $\omega^{i}\cdot (A_1,\cdots, A_r)=(A_{1+i(\text{mod} r)},\cdots , A_{r+i(\text{mod} r)})$. Clearly this action is free on $Hom_{p}(\mathcal{K}_r, H)$, and consequently $Hom_{p}(\mathcal{K}_r, H)$ is a free $\mathbb{Z}_{r}$-poset. 
Therefore, by Theorem $8$, one the one hand we have 
$$r+ind_{G} |Hom(\mathcal{K}_r,H)|\leq \chi\left(C_{Hom_{p}(\mathcal{K}_r, H)}\right).$$
On the other hand, we have an obvious graph homomorphism from $C_{Hom_{p}(\mathcal{K}_r, H)}$ to $H$ by sending each vertex $(A_1, \cdots, A_r)$ to an arbitrary element of $A_1$. This give us the following lower bound on chromatic number
$$\chi (C_{Hom_{p}(\mathcal{K}_r, H)})\leq\chi (H).$$
The following corollary is an immediate consequence of the definition of the test graphs, Proposition $5.3$, and the above argument.
\begin{corollary}
All complete graphs are test graphs.
\end{corollary}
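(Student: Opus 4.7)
The plan is to assemble the two inequalities established in the paragraph preceding the corollary and combine them with the connectivity bound from Proposition 5 to directly verify the test graph property for $\mathcal{K}_r$. First I would recall from the preceding discussion that, because the cyclic shift action makes $Hom_p(\mathcal{K}_r, H)$ a free $\mathbb{Z}_r$-poset, Theorem 8 yields
$$r + ind_{\mathbb{Z}_r} |Hom(\mathcal{K}_r, H)| \leq \chi\left(C_{Hom_p(\mathcal{K}_r, H)}\right),$$
and the map sending $(A_1, \ldots, A_r)$ to any element of $A_1$ is a graph homomorphism from $C_{Hom_p(\mathcal{K}_r, H)}$ to $H$, so $\chi(C_{Hom_p(\mathcal{K}_r, H)}) \leq \chi(H)$. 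Chaining these gives
$$r + ind_{\mathbb{Z}_r} |Hom(\mathcal{K}_r, H)| \leq \chi(H).$$

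Next, I would invoke part 3 of Proposition 5, which states $conn(X) + 1 \leq ind_G X$ for any $G$-space $X$. Applying this to $X = |Hom(\mathcal{K}_r, H)|$ with its induced $\mathbb{Z}_r$-action, the hypothesis that $|Hom(\mathcal{K}_r, H)|$ is $k$-connected forces $ind_{\mathbb{Z}_r} |Hom(\mathcal{K}_r, H)| \geq k+1$. Substituting this into the inequality above yields $\chi(H) \geq r + k + 1 = k + 1 + \chi(\mathcal{K}_r)$, which is exactly the defining inequality of a test graph. Since $H$ and $k$ were arbitrary, this completes the proof.

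The only substantive point to verify in the argument is the claim that $(A_1, \ldots, A_r) \mapsto a \in A_1$ is a genuine graph homomorphism from $C_{Hom_p(\mathcal{K}_r, H)}$ to $H$. I would make this rigorous by observing that if $A$ is adjacent to $B$ in the compatibility graph via the shift $\omega^i$ for some $1 \leq i \leq r-1$, say $A \leq \omega^i \cdot B$, then $A_1 \subseteq B_{1+i}$ (indices taken mod $r$), so the image $a \in A_1$ of $A$ lies in $B_{1+i}$; and since $B \in Hom_p(\mathcal{K}_r, H)$ the membership condition $B_1 \times B_{1+i} \subseteq E(H)$ ensures that $a$ is adjacent in $H$ to any chosen representative of $B_1$. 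Beyond this verification there is no real obstacle, since the heavy lifting is already packaged inside Theorem 8.
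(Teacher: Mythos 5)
Your proposal is correct and takes essentially the same route as the paper: chain Theorem 8 with the homomorphism $C_{Hom_{p}(\mathcal{K}_r,H)}\to H$ and then apply Proposition 5.3 to convert $k$-connectedness into the index bound. The only difference is that you explicitly verify the homomorphism claim (which the paper calls ``obvious''), and your verification is sound, since the case $\omega^i\cdot B\leq A$ reduces to your stated case by applying $\omega^{-i}$ and swapping the roles of $A$ and $B$.
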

As another application of our main result, we also reprove that all cycles of even length are test graphs. Let $C_{2r}$ be a cycle of even length with vertex set $\{1,\cdots, 2r\}$. The group $\mathbb{Z}_2=\{e=\omega^0, \omega\}$ acts on $Hom_{p}(C_{2r}, H)$ as follows, $\omega\cdot (A_1,\cdots, A_{i}, \cdots,A_{2r})=(A_{2r},A_{2r-1},\cdots , A_{2}, A_{1})$. Clearly, $Hom_{p}(C_{2r}, H)$ with this action is a free $\mathbb{Z}_2$-poset. Now, by Theorem $8$, we have
$$2+ind_{G} |Hom(C_{2r},H)|\leq\chi\left(C_{Hom_{p}(C_{2r}, H)}\right).$$
Indeed, we have a graph homomorphism from $C_{Hom(C_{2r}, H)}$ to $H$ by sending each vertex $(A_1, \cdots, A_{2r})$ to an arbitrary element of $A_1$. Thus,
$$\chi (C_{Hom(C_{2r}, H)})\leq\chi (H).$$
In conclusion, we have the following corollary.
\begin{corollary}
All cycle graphs of even length are test graphs.
\end{corollary}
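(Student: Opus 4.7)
The plan is to instantiate Theorem 8 for the $\mathbb{Z}_2$-poset $Hom_p(C_{2r},H)$, producing a lower bound on the chromatic number of its compatibility graph, and then transport that bound to $H$ via a graph homomorphism. Since $\chi(C_{2r})=2$, the target conclusion---that $C_{2r}$ is a test graph---reads: if $|Hom(C_{2r},H)|$ is $k$-connected, then $\chi(H)\geq k+3$.

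The first step is to verify that the reflection action $\omega\cdot(A_1,\ldots,A_{2r})=(A_{2r},\ldots,A_1)$ is free, so that Theorem 8 is applicable. Any fixed tuple would satisfy $A_i=A_{2r+1-i}$ for all $i$, and in particular $A_1=A_{2r}$. But $(2r,1)\in E(C_{2r})$ forces $A_{2r}\times A_1\subseteq E(H)$, and hence $A_1\times A_1\subseteq E(H)$; since $H$ is simple and $A_1$ is non-empty, this is impossible. Therefore the action is free and Theorem 8 yields
$$ind_{\mathbb{Z}_2}\,|Hom(C_{2r},H)|+2 \leq \chi\bigl(C_{Hom_p(C_{2r},H)}\bigr).$$

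The second step is to produce the graph homomorphism suggested in the preceding discussion. I would send each vertex $A=(A_1,\ldots,A_{2r})$ of $C_{Hom_p(C_{2r},H)}$ to an arbitrarily chosen $\varphi(A)\in A_1$. To check edge preservation, suppose $A$ and $B$ are compatible, i.e. $A$ and $\omega\cdot B=(B_{2r},\ldots,B_1)$ are comparable in $Hom_p(C_{2r},H)$. If $A\leq\omega\cdot B$, then $A_1\subseteq B_{2r}$, and since $(2r,1)\in E(C_{2r})$ we have $B_{2r}\times B_1\subseteq E(H)$, so $(\varphi(A),\varphi(B))\in A_1\times B_1\subseteq E(H)$. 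If instead $\omega\cdot B\leq A$, then $B_1\subseteq A_{2r}$, and the same edge of $C_{2r}$ read off of $A$ gives $A_1\times A_{2r}\subseteq E(H)$, hence again $(\varphi(A),\varphi(B))\in E(H)$. Therefore $\chi\bigl(C_{Hom_p(C_{2r},H)}\bigr)\leq \chi(H)$.

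Finally, combining the two inequalities with Proposition 5(3), which gives $conn(|Hom(C_{2r},H)|)+1\leq ind_{\mathbb{Z}_2}\,|Hom(C_{2r},H)|$, produces $\chi(H)\geq k+3$ whenever $|Hom(C_{2r},H)|$ is $k$-connected, which is exactly the test graph condition for $C_{2r}$. The most delicate point I anticipate is ensuring that the homomorphism $\varphi$ respects edges in \emph{both} comparability directions; the clean observation that rescues the argument is that the reflection swaps coordinates $1$ and $2r$, and these two indices are themselves adjacent in $C_{2r}$, so projection onto the first coordinate always witnesses an edge of $H$ regardless of which of $A$ and $\omega\cdot B$ sits on top.
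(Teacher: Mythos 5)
Your proof is correct and takes essentially the same route as the paper: apply Theorem 8 to the free reflection action of $\mathbb{Z}_2$ on $Hom_p(C_{2r},H)$, compose with the projection-onto-$A_1$ homomorphism into $H$, and finish with Proposition 5(3). You additionally spell out the two claims the paper leaves as bare assertions---freeness of the action and edge-preservation of the projection---both of which, as you observe, hinge on the fact that the reflection swaps the adjacent indices $1$ and $2r$ of $C_{2r}$.
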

\section*{Acknowledgements}
This paper is a part of my PhD thesis under supervision of Professor Hossein Hajiabolhassan. I wish to express my most sincere gratitude to him, for his guidance, great support and kind advice throughout my PhD research studies. 





\end{document}